\newtheorem{theorem}{Theorem}[section]
\newtheorem{lemma}[theorem]{Lemma}
\newtheorem{corollary}[theorem]{Corollary}
\theoremstyle{definition}
\theoremstyle{remark}
\newtheorem{remark}[theorem]{Remark}
\numberwithin{equation}{section}
\begin{document}
 
\title
[Index estimates for harmonic Gauss maps]
{Index estimates for harmonic Gauss maps}

\author[Carvalho]{A. de Carvalho}
\address{
 Departamento de Matemática,  Universidade Federal de Pernambuco, Recife, Brazil}
\email{alcides.junior@ufpe.br }

\author[Cavalcante]{M. Cavalcante$^*$}
\address{
 Instituto de Matemática, Universidade Federal de Alagoas,  
 Maceió, Brazil}
\email{marcos@pos.ufal.br}
\thanks{$^*$Corresponding author}

\author[Costa-Filho]{W. Costa-Filho}
\address{
 Campus Arapiraca, Universidade Federal de Alagoas, 
 Arapiraca, Brazil}
\email{wagner.filho@arapiraca.ufal.br}

\author[Oliveira]{D. de Oliveira}
\address{
Departamento de Ciências Exatas,  Universidade Estadual de Feira de Santana, Feira de Santana, Brazil}
\email{dfoliveira@uefs.br}

\subjclass[2020]{Primary 53C42, 58E20; Secondary 49Q10, 49R05, 22E15.}

\date{\today}

\commby{}

\begin{abstract} 
Let $\Sigma$ denote a closed surface with constant mean curvature in $\mathbb{G}^3$, a 3-dimensional Lie group equipped with a bi-invariant metric. For such surfaces, there is a harmonic Gauss map which maps values to the unit sphere within the Lie algebra of $\mathbb{G}^3$.
We prove that the energy index of the Gauss map of $\Sigma$ is bounded below by its topological genus. We also obtain index estimates in the case of complete non-compact surfaces.  
\end{abstract}

\maketitle 


\section{Introduction}

Let $\Sigma$ be a closed, connected, orientable surface immersed in Euclidean space $\mathbb{R}^3$.
The classical Ruh-Vilms theorem \cite{Ruh,RV} states that $\Sigma$ has constant mean curvature (CMC) if and only if the Gauss map $N:\Sigma\to\mathbb{S}^2\subset\mathbb{R}^3$ is a harmonic map. Consequently, its energy index $Ind_{\mathcal{E}}(N)$ is well defined, and therefore, we can ask how it is related to the geometry and topology of the surface. 
This problem was addressed by Palmer in \cite{Palmer91}, where it was demonstrated that $Ind_{\mathcal{E}}(N)$ is bounded below by the largest integer $< 1+ g(\Sigma)/2$, where $g(\Sigma)$ represents the topological genus of $\Sigma$.

Later, by considering the canonical embedding $\mathbb{S}^3\subset \mathbb{R}^4$, Palmer \cite{Palmer93} obtained an estimate of the energy index of the Gauss map of constant mean curvature (CMC) surfaces in the unit sphere. In this context, the Gauss map is considered to take values in one factor of the Grassmannian of oriented 2-planes in $\mathbb{R}^4$, denoted by $G_{2,4} = \mathbb{S}^2\times \mathbb{S}^2$. In this setting, he demonstrated that $Ind_{\mathcal{E}}(N)\geq g(\Sigma)$.

An analogous problem concerning the index of the area functional for CMC surfaces was considered by the second and fourth authors in \cite{CO}, inspired by the study of minimal hypersurfaces (see \cite{Savo, ACS}). It has been proved that if $\Sigma$ is a closed CMC surface immersed in the Euclidean space $\mathbb R^3$, then $Ind(\Sigma)\geq g(\Sigma)/3$. Similarly, for a closed CMC surface immersed in the unit sphere $\mathbb S^3$, it holds that $Ind(\Sigma)\geq g(\Sigma)/4$. Examples of CMC surfaces of arbitrary genus were constructed in the Euclidean space by Wente \cite{wente} and Kapouleas \cite{Ka1, Ka2, Ka3}, and within the unit sphere by Butscher and Pacard \cite{B1, B2, BP}. Subsequently, the second and fourth authors together with Silva \cite{COR} proved lower bounds for $Ind(\Sigma)$ in terms of the genus for CMC surfaces immersed in 3-manifolds with the Killing property, which includes, in particular, Lie groups with a bi-invariant metric.

In this article, we consider the problem of estimating the energy index of the Gauss map of CMC surfaces immersed in a three-dimensional Lie group $\mathbb{G}^3$ endowed with a bi-invariant metric, as described below. It is worth noting that such groups can be obtained as quotients of the 3-sphere $\mathbb{S}^3$ or Euclidean space $\mathbb{R}^3$ (see \cite{EFFR}). For the reader’s convenience, we recall that the three-dimensional Lie groups endowed with a bi-invariant metric, as stated in \cite{EFFR}, are the Euclidean space $\mathbb{R}^{3}$ and the sphere $\mathbb{S}^3$, the projective space $\mathbb{RP}^3 \,(= \mathrm{SO}(3))$, and the products $\mathbb{T}^3 = \mathbb{S}^1 \times \mathbb{S}^1 \times \mathbb{S}^1$, $\mathbb{S}^1 \times \mathbb{S}^1 \times \mathbb{R}$, and $\mathbb{S}^1 \times \mathbb{R}^2$, all of which have constant sectional curvature with respect to the chosen bi-invariant metric. However, working with these quotients introduces complications that are not present in the classical settings. For instance, a closed surface in the quotient may correspond to a non-compact surface in the universal cover or may have a different genus, which can significantly affect the geometric analysis. This situation contrasts with Palmer's work \cite{Palmer91, Palmer93}, where the ambient space does not involve such quotients. In this paper, we tackle these challenges by extending Palmer's results to CMC surfaces in $\mathbb{G}^3$, providing energy index estimates for their Gauss maps.

Let $\Sigma\subset \mathbb{G}^3$ be a compact, oriented, immersed surface, and let $\eta$ denote a global unit normal vector field along $\Sigma$. Using the Lie group structure and denoting by $\mathbb{S}^2$ the unit sphere in the Lie algebra $\mathfrak{g} \cong T_e\mathbb{G}^3$, we define the \emph{generalized Gauss map} of $\Sigma$ as the map $\mathcal{N}:\Sigma \to \mathbb{S}^2$ given by
$$
\mathcal{N}(p) = d(L_{p^{-1}})_p(\eta(p)),
$$
where $L_p$ is the left translation in $\mathbb{G}^3$. It is important to note that, in the case where $\mathbb{G}^3 = \mathbb{S}^3$, this Gauss map differs from the one considered by Palmer in \cite{Palmer93}.

It turns out that a Ruh-Vilms type theorem applies in this scenario. More precisely, $\Sigma\subset \mathbb G^3$ has constant mean curvature if and only if $\mathcal N$ is a harmonic map, and thus, $Ind_{\mathcal E}(\mathcal N)$ is well defined. This result was established by Masal'tsev in \cite{Masaltsev} for the case of surfaces in the 3-sphere and by Espírito Santo et al. in \cite{EFFR} for hypersurfaces immersed in Lie groups equipped with a bi-invariant metric.

Our main theorem below establishes a generalization of Palmer's theorem to this setting. It reads as follows.

\begin{theorem}\label{maintheorem1} 
Let $\Sigma$ be a closed, orientable constant mean curvature $H$ surface immersed in a three-dimensional Lie group 
$\mathbb G^3$ endowed with a bi-invariant metric. If $\mathbb G^3$ is an Abelian Lie group, assume further that $H\neq 0$. Then,
$$
Ind_\mathcal E (\mathcal N)\geq g(\Sigma).
$$
\end{theorem}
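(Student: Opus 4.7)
The strategy I would follow is Palmer's approach \cite{Palmer93} for CMC surfaces in $\mathbb{S}^{3}$, adapted to the Lie-group setting. The first step is to write down the Jacobi operator for the energy functional at the harmonic map $\mathcal{N}\colon\Sigma\to\mathbb{S}^{2}\subset\mathfrak{g}$. Since the target $\mathbb{S}^{2}$ has constant sectional curvature one, the second variation takes the form
\[
\mathcal{E}''(V,V)=\int_{\Sigma}\bigl(|\nabla V|^{2}-|d\mathcal{N}|^{2}|V|^{2}\bigr)\,dA
\]
on sections $V$ of $\mathcal{N}^{*}T\mathbb{S}^{2}$, which I would identify with $\mathfrak{g}$-valued functions on $\Sigma$ pointwise orthogonal to $\mathcal{N}$. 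One then expands $|d\mathcal{N}|^{2}$ using the Ruh--Vilms--type formula of \cite{EFFR}: an expression of the form $|A|^{2}$ plus a nonnegative Ricci-type contribution from the bi-invariant metric, which is strictly positive pointwise off the Abelian case.

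The test variations producing the negative directions should be built out of holomorphic $1$-forms on $\Sigma$, endowed with the conformal structure of its induced metric. The space $H^{0}(K_{\Sigma})$ has complex dimension $g$. To each $\omega\in H^{0}(K_{\Sigma})$ I would associate a section $V_{\omega}\in\Gamma(\mathcal{N}^{*}T\mathbb{S}^{2})$ by contracting $\omega$ with $d\mathcal{N}$ via the metric on $\Sigma$, and then rotating by the almost complex structure of $T\mathbb{S}^{2}$ at $\mathcal{N}(p)$. Harmonicity of $\mathcal{N}$ combined with $\overline{\partial}\omega=0$ should produce a Bochner--Weitzenb\"ock identity in which the gradient term $|\nabla V_{\omega}|^{2}$ cancels the bulk of $|d\mathcal{N}|^{2}|V_{\omega}|^{2}$, leaving a manifestly nonpositive residual integrand. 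The residual is forced to be strictly negative using either the positive Ricci term (non-Abelian case) or the CMC constant $H\neq 0$ (Abelian case), both needed to rule out the trivial situation in which $\mathcal{N}$ is constant. A final unique-continuation argument for holomorphic $1$-forms would then show that $\omega\mapsto V_{\omega}$ yields a real-linearly independent family of size $g$ inside the negative subspace of $\mathcal{E}''$.

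The main obstacle I anticipate is the Bochner computation producing the pointwise cancellation. The holomorphicity of the Hopf differential $Q$ (which holds precisely when $\Sigma$ is CMC) must be combined with $\overline{\partial}\omega=0$ and the Codazzi equations for a hypersurface in a Lie group with bi-invariant metric; these equations involve the Lie bracket on $\mathfrak{g}$, so the sign of the residual term is tied to the structure constants of $\mathbb{G}^{3}$. Identifying the precise value of the residual so that it has the correct sign---and simultaneously the correct real-dimensional count of independent test sections (reducing from the naive $2g$ coming from the complex count to the claimed $g$ by exploiting the complex structure on $T\mathbb{S}^{2}$)---is the delicate point where the bi-invariant metric hypothesis is genuinely used and where the dichotomy between the Abelian and non-Abelian cases in the statement emerges naturally.
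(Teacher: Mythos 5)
Your overall strategy (Palmer's method: use a $2g$-dimensional space of harmonic objects on $\Sigma$, identified with sections of $\mathcal{N}^{*}T\mathbb{S}^{2}$, as test variations) points in the same direction as the paper, but there is a genuine gap exactly at the point you yourself flag as delicate: the computation that produces the negative sign is not carried out, and the way you propose to carry it out would not close. First, your formula $\mathcal{E}''(V,V)=\int_{\Sigma}(|\nabla V|^{2}-|d\mathcal{N}|^{2}|V|^{2})\,dA$ omits the term $+\sum_{i}\langle V,d\mathcal{N}e_{i}\rangle^{2}$ coming from the curvature of the target $\mathbb{S}^{2}$; since $d\mathcal{N}$ is generically of rank $2$ and $V$ takes values in the rank-$2$ bundle $\mathcal{N}^{*}T\mathbb{S}^{2}$, that term cannot be discarded. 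Second, and more seriously, you hope that each individual test section $V_{\omega}$ yields a ``manifestly nonpositive residual integrand.'' This cannot happen: after substituting $dL_p\circ d\mathcal{N}=-(A+\alpha)$ and the Gauss equation, the second variation at a harmonic vector field $\xi$ contains the term $2H\langle A\xi,\xi\rangle-\langle A\xi,\alpha\xi\rangle$, which has no pointwise sign (the traceless part of $A$, i.e.\ the Hopf differential, enters with an indefinite sign), so no Bochner cancellation can make a single test direction manifestly negative.

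The paper's resolution is an averaging over the pair $(\xi,\xi^{\perp})$ with $\xi^{\perp}=J\xi$: one has $\langle A\xi,\xi\rangle+\langle A\xi^{\perp},\xi^{\perp}\rangle=(\operatorname{tr}A)\|\xi\|^{2}=2H\|\xi\|^{2}$, and the antisymmetry of the invariant shape operator $\alpha$ (Lemma \ref{alphaA}) gives $\langle A\xi,\alpha\xi\rangle+\langle A\xi^{\perp},\alpha\xi^{\perp}\rangle=0$, whence
\[
D^{2}_{\xi}\mathcal{E}+D^{2}_{\xi^{\perp}}\mathcal{E}=-(4H^{2}+2c)\int_{\Sigma}\|\xi\|^{2}\,d\Sigma<0,
\]
with $c=1$ in the non-Abelian case and $c=0$ in the Abelian case (which is where $H\neq 0$ is needed). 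Only the sum over the pair is negative; a $g$-dimensional negative subspace is then extracted from the $2g$-dimensional space of harmonic vector fields by selecting one member of each pair. Your proposed unique-continuation step and the strict pointwise negativity of each $V_{\omega}$ are neither needed nor available; without the trace identity and the antisymmetry of $\alpha$ the indefinite second-fundamental-form terms are not controlled, and the argument does not go through.
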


\begin{remark}
In an oriented three–dimensional Riemannian manifold, an immersed surface is orientable if and only if it is two–sided.
Since our definition of constant mean curvature and of the generalized Gauss map requires the existence of a global unit normal 
vector field, all surfaces considered in this paper are automatically orientable.
\end{remark}

The condition $H \neq 0$ for Abelian Lie groups is essential, as it naturally arises in the proof and is exemplified by the case of the totally geodesic torus $\mathbb T^2$ embedded in $\mathbb T^3$, where the harmonic Gauss map has an index of zero, yet $g(\mathbb T^2)=1$.

Now, recall that Chern \cite{Chern} proved that any immersed constant mean curvature sphere in a 3-dimensional space form must be a geodesic sphere. From this result, we derive the following direct consequence of Theorem \ref{maintheorem1}:

\begin{corollary}
Let $\Sigma$ be a closed surface with constant mean curvature, immersed in $\mathbb{S}^3$. If the generalized Gauss map $\mathcal{N}$ is stable, then $\Sigma$ must be a geodesic sphere in $\mathbb{S}^3$.
\end{corollary}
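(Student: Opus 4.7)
The corollary should follow directly from Theorem~\ref{maintheorem1} combined with the cited result of Chern; the only step that requires any verification is confirming that $\mathbb{S}^3$ fits into the Lie-group framework of the main theorem.

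First I would observe that the round $3$-sphere can be identified with the compact Lie group $SU(2)$, whose Killing form yields (up to a constant) precisely the round metric, and this metric is bi-invariant. Since $SU(2)$ is non-abelian, the supplementary hypothesis $H\neq 0$ imposed in the abelian case does not come into play, so Theorem~\ref{maintheorem1} applies unconditionally to every closed CMC surface $\Sigma\subset\mathbb{S}^3$.

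Next, I would unpack stability: by definition, $\mathcal{N}$ being stable means the second variation of the energy is non-negative on every admissible variation, i.e.\ $Ind_{\mathcal{E}}(\mathcal{N})=0$. Plugging this into Theorem~\ref{maintheorem1} gives
\[
g(\Sigma)\leq Ind_{\mathcal{E}}(\mathcal{N})=0,
\]
so $\Sigma$ is a topological $2$-sphere.

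Finally, I would invoke Chern's theorem \cite{Chern}, which asserts that any closed immersed CMC $2$-sphere in a three-dimensional space form is totally umbilical. Inside $\mathbb{S}^3$ the only totally umbilical surfaces are the geodesic spheres, and the conclusion follows. Since everything is reduced to recognising $\mathbb{S}^3$ as a bi-invariant Lie group and then chaining Theorem~\ref{maintheorem1} with \cite{Chern}, there is no genuine obstacle in the argument beyond that verification; the substantive work is entirely contained in the main theorem and in Chern's rigidity result.
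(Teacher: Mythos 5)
Your proposal is correct and follows exactly the route the paper intends: $\mathbb{S}^3$ is the non-abelian Lie group $SU(2)$ with a bi-invariant round metric (as discussed in the paper's Section 2), so Theorem~\ref{maintheorem1} applies without the $H\neq 0$ hypothesis, stability forces $g(\Sigma)=0$, and Chern's rigidity theorem finishes the argument. No differences worth noting.
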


We also consider the case of complete, non-compact surfaces. Since harmonicity is a local property, the generalized Gauss map of complete CMC surfaces in $\mathbb{G}^3$ remains harmonic, and the same considerations apply. However, under this condition, the index $Ind_{\mathcal{E}}(\mathcal{N})$ could potentially be infinite. This problem was considered by Palmer \cite{Palmer93} in the Euclidean space, and in this case, the space of $L^2$-harmonic 1-forms, $\mathcal H^1(\Sigma)$, plays the role of the genus. In fact, if $g(\Sigma)$ is the number of handles of $\Sigma$, then $\dim \mathcal{H}^1(\Sigma)\geq g(\Sigma)$. Of course, it is convenient to use the same notation as the genus, because these numbers coincide in the closed case.

We have the following result:
\begin{theorem}\label{maintheorem2} 
Let $\Sigma$ be a complete non-compact, orientable constant mean curvature $H$ surface immersed in a three-dimensional Lie group $\mathbb{G}^3$ endowed with a bi-invariant metric. If $\mathbb{G}^3$ is an Abelian Lie group, assume further that $H \neq 0$. Then,
$$
\text{Ind}_{\mathcal{E}}(N) \geq \dim \mathcal{H}^1(\Sigma)/2.
$$
\end{theorem}

We recall that the Ruh-Vilms theorem admits extensions for suitable Gauss maps in other ambient spaces. For example, extensions apply to the sphere $\mathbb{S}^{7}$ or $\mathbb{CP}^{3}$, as proved by Bittencourt et al. in \cite{bittencourt}; as well as to homogeneous manifolds with invariant Riemannian metrics, as explored by Bittencourt and Ripoll \cite{BR}, and to symmetric spaces, as demonstrated by Ramos and Ripoll \cite{RamosRipoll}. We also refer to \cite{Daniel, DFM, FM} for the case of harmonic Gauss maps from CMC surfaces in some homogeneous spaces into the hyperbolic plane $\mathbb H^2$. Moreover, suitable Gauss maps play a key role in the works of Folha and Peñafiel \cite{folha} and Gálvez and Mira \cite{GalvezMira}.

This leads to the natural question: does a version of Palmer's theorem hold in these varied contexts?

 \subsection*{Acknowledgments}
We would like to thank the anonymous referee for a careful reading of the manuscript and for several valuable comments that helped to improve the presentation.

The authors are grateful to Cezar Oniciuc and Dorel Fetcu for their helpful conversations and valuable comments on this work. The first author was partially supported by FAPEAL (E:60030.0000000323/ 2023). The second author  acknowledges support from the ICTP through the Associates Programme (2022-2027), from CNPq (Grants 405468/2021-0 and 11136/2023-0. This work was also carried out within the CAPES--Math AmSud project \emph{New Trends in Geometric Analysis} (CAPES, Grant 88887.985521/2024-00).


\section{The Gauss map of hypersurfaces in Lie groups}
Let $\mathbb{G}$ denote a $(n+1)$-dimensional Lie group endowed with a left-invariant Riemannian metric and with Lie algebra $\mathfrak{g}$.

Given a two–sided (hence orientable) hypersurface $\Sigma$ immersed in $\mathbb{G}$, 
let $\eta$ be a globally defined unit normal vector field along $\Sigma$.
In particular, throughout the paper we only consider two–sided immersions, so that the generalized Gauss map 
$\mathcal N : \Sigma \to \mathbb S^2$ is globally well defined.

In this setting, it is natural to use the group structure to define the map $\mathcal{N} : \Sigma \to \mathbb{S}^{n}$ which translates the unit normal vector field of $\Sigma$ to the identity of $\mathbb{G}$. More precisely,
\begin{equation*}
    \mathcal{N}(p) = d(L_{p}^{-1})_{p}(\eta(p)),\label{DefN}
\end{equation*}
where $L_p$ denotes the left translation by $p$ in $\mathbb{G}$.

This map $\mathcal{N}$ serves as a natural extension of the usual Gauss map for hypersurfaces in Euclidean spaces and has been studied by Ripoll in \cite{ripoll91}. See also \cite{EFFR} and \cite{GalvezMira}.

As mentioned in the introduction, it was demonstrated by Espírito Santo et al. in \cite{EFFR} that if the metric in $\mathbb G$ is bi-invariant, then a hypersurface $\Sigma \subset \mathbb G$ has constant mean curvature if and only if its Gauss map $\mathcal{N}$ is a harmonic map. This result extends the classical Ruh-Vilms theorem to the context of hypersurfaces in Lie groups equipped with a bi-invariant metric.
 
In order to study the extrinsic geometry of $\Sigma\subset \mathbb{G}$, it is essential to determine the differential of the Gauss map $\mathcal{N}$. As shown by Ripoll in \cite[Proposition~3]{ripoll91}, for every $p\in\Sigma$ one has
\begin{equation}\label{dNX}
    dL_p \circ d\mathcal{N}_p = -(A_p + \mathcal{A}_p),
\end{equation}
where $A_p$ is the classical shape operator of the immersion and $\mathcal{A}_p$ denotes the \emph{invariant shape operator}. The latter is defined as a section in the bundle $Hom(T\Sigma, T\Sigma)$ through the expression
$$
\mathcal{A}_p(X) = (\nabla_X \tilde{\eta_p})(p),
$$
with $\tilde{\eta_p}$ being the left invariant extension of the unit normal vector field $\eta$ such that $\eta(p) = \tilde{\eta}_p(p)$ at each point $p$ on $\Sigma$.
 Since the Levi--Civita connection is tensorial in the first argument, the value of $\mathcal{A}_p(X)$ is determined by $X(p)$ and the left invariant extension $\tilde{\eta_p}$ of the unit normal vector field $\eta$. 

To make this dependence explicit, fix $p\in\Sigma$ and choose an orthonormal basis $\{X_1,\ldots,X_{n+1}\}\subset T_p\mathbb{G}$ such that $X_{n+1}=\eta(p)$. Let $W_1,\ldots,W_{n+1}$ be the corresponding left-invariant vector fields obtained by left-translation, and denote by $Z_j = W_j(e)$ the induced orthonormal basis of the Lie algebra. Consider the left-invariant extension $\widetilde{\eta_p}$ of the normal direction at $p$, i.e., the unique left-invariant vector field satisfying $\widetilde{\eta_p}(p)=\eta(p)$. In terms of the frame $\{Z_j\}$, this field can be written as
$$
\widetilde{\eta_p}=\sum_{j=1}^{n+1} a_j\, W_j,
$$
where the coefficients satisfy $a_j(p)=0$ for $1\le j\le n$ and $a_{n+1}(p)=1$. 

In this framework, and using the fact that $\widetilde{\eta}_p=W_{n+1}$, the invariant shape operator at $p$ is determined by the matrices
$$
\alpha_B=\big( \langle \nabla_{W_i}W_{n+1},\, W_j\rangle \big)_{1\le i,j\le n},
$$
as stated in \cite[Remark~4]{ripoll91}.

For any isometric automorphism $F$ of $\mathfrak{g}$, we have that 
\begin{equation}\label{invariant}
    \alpha_B = \alpha_{F(B)}.
\end{equation}

Consequently, if $O_{n+1}$ denotes the set of orthonormal $(n+1)$-frames in
$\mathfrak{g}$, the collection of matrices $\alpha_B$ can be parametrized by
the quotient space $O_{n+1}/I(\mathbb{G})$, where $I(\mathbb{G})$ denotes the
group of isometric automorphisms of $\mathbb{G}$. In particular, we obtain the
following special cases; the first two are due to \cite{ripoll91}.

\begin{itemize}
 \item[1.] {\it The commutative case}. When $\mathbb{G}$ is commutative, then the invariant shape operator $\mathcal{A}$ of $\Sigma$ is identically zero, and $\mathcal{N}$ is the
usual Gauss map of $\Sigma.$ In fact, since $\mathbb{G}$ admits a bi-invariant metric, we have that $\nabla_XY =\dfrac{1}{2} [X, Y ] ,$ whereas due to its commutativity, we arrive at $\nabla_XY =0.$

\item[2.]{\it The sphere  $\mathbb{S}^3$.} We consider in $\mathbb{S}^3$ a bi-invariant metric. It is known that $Ad_{\mathbb{S}^3} = \mathsf O(3),$ that is, the image of adjoint representation of $\mathbb{S}^3$ is the orthogonal group $\mathsf O(3),$ which acts transitively on $O_3.$ Therefore, by (\ref{invariant}), $\mathcal{A}$ is constant.
Computations using the quaternionic model for $\mathbb{S}^3$ yields
$$
\alpha_B=\begin{pmatrix}
0 & 1 \\
-1 & 0
\end{pmatrix}.
$$
\item[3.]{\it The projective space $\mathbb{R}P^3$.} We consider a bi-invariant metric on $\mathsf{SO}(3)$. It is known that the image of the adjoint representation of $\mathsf{SO}(3)$ is $\mathsf{SO}(3)$ itself. In other words, under the adjoint representation, $\mathsf{SO}(3)$ maps to the rotation group $\mathsf{SO}(3)$, which has two orbits when acting on $O_3$. Therefore, using (\ref{invariant}) again, we get
$$
\alpha_B=\frac{1}{2}\begin{pmatrix}
0 & 1 \\
-1 & 0
\end{pmatrix}
\quad \mbox{or}\quad 
\alpha_B=\frac{1}{2}\begin{pmatrix}
0 & -1 \\
1 & 0
\end{pmatrix}.
$$
\end{itemize}

Recalling the classification of 3-dimensional Lie groups with a bi-invariant metric (see \cite{EFFR}), we can summarize the above discussion as follows. 

\begin{lemma}\label{lemma:normA}
Let $\Sigma$ be an orientable surface immersed in a three-dimensional Lie group $\mathbb{G}^3$ equipped with a bi-invariant metric. Then, the invariant shape operator $\mathcal{A}$ is either identically zero or takes one of the following forms on orthonormal basis $B$ of left invariant vector fields $B$:
$$
\alpha_B=\begin{pmatrix}
0 & 1 \\
-1 & 0
\end{pmatrix},\quad\alpha_B=\frac{1}{2}\begin{pmatrix}
0 & 1 \\
-1 & 0
\end{pmatrix}, \quad \text{or} \quad \alpha_B=\frac{1}{2}\begin{pmatrix}
0 & -1 \\
1 & 0
\end{pmatrix}.
$$
\end{lemma}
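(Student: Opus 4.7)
The plan is to reduce the lemma to the case-by-case computation already carried out in the three bulleted items preceding the statement, by invoking the classification of three-dimensional Lie groups equipped with a bi-invariant metric (as recalled from \cite{EFFR}).

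First, I would recall that up to local isomorphism, a three-dimensional Lie group $\mathbb{G}^3$ that admits a bi-invariant metric must be either abelian, or locally isomorphic to $\mathbb{S}^3$ (equivalently $\mathsf{SU}(2)$), or locally isomorphic to $\mathbb{R}P^3$ (equivalently $\mathsf{SO}(3)$). Since the invariant shape operator $\alpha$ is defined purely in terms of the left-invariant connection and the unit normal, it depends only on the Lie algebra $\mathfrak{g}$ together with its bi-invariant inner product, and in particular it is determined by the local model. Therefore, to prove the lemma it suffices to check the three cases listed above.

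Next, I would invoke the invariance identity $\alpha_B = \alpha_{F(B)}$ recorded in~(\ref{invariant}), which ensures that the matrix representation of $\alpha$ in any left-invariant orthonormal frame $B = \{W_1, W_2, W_3\}$ adapted to $\Sigma$ (with $W_3 = \tilde{\eta}$) is an invariant of the orbit of $B$ under $I(\mathbb{G})$. When $\mathbb{G}$ is abelian, one has $\nabla_X Y = \tfrac{1}{2}[X,Y] = 0$ for left-invariant fields, hence $\alpha \equiv 0$, as observed in item~(1). When $\mathbb{G} = \mathbb{S}^3$, the adjoint representation acts as the full orthogonal group $\mathsf{O}(3)$ on $O_3$, so every orthonormal frame is equivalent, and the direct quaternionic computation in item~(2) yields the unique matrix $\bigl(\begin{smallmatrix} 0 & 1 \\ -1 & 0 \end{smallmatrix}\bigr)$. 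When $\mathbb{G} = \mathbb{R}P^3$, the adjoint representation acts as $\mathsf{SO}(3)$, whose action on $O_3$ splits into exactly two orbits (corresponding to the two possible orientations of the frame), giving the two allowed matrices in item~(3).

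Combining these three observations yields the trichotomy asserted in the statement. The main substantive input is the classification of three-dimensional bi-invariant Lie groups and the explicit quaternionic computation underlying item~(2); since both are available from \cite{EFFR} and \cite{ripoll91}, the remaining work is simply the bookkeeping needed to assemble the three cases into a single statement and to verify that the invariance property~(\ref{invariant}) really does rule out any other representatives.
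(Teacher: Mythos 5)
Your proposal matches the paper's own justification: the lemma is stated there precisely as a summary of the three preceding bulleted cases (abelian, $\mathbb{S}^3$, $\mathbb{R}P^3$) together with the classification of three-dimensional Lie groups admitting a bi-invariant metric from \cite{EFFR} and the invariance property (\ref{invariant}). You have simply organized that same discussion into an explicit argument, so the approach is essentially identical.
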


To conclude this section, we need to explore one additional property of the invariant shape operator $\mathcal{A}$.

\begin{lemma}\label{alphaA}
Let $\Sigma^n$ be an oriented hypersurface immersed in a Lie group $\mathbb{G}^{n+1}$ endowed with a bi-invariant metric. Then, the invariant shape operator $\alpha_B$, corresponding to an orthonormal basis $B$ of left invariant vector fields, is an anti-symmetric matrix. In particular,
$$
\langle \mathcal{A}, A \rangle = 0,
$$
meaning that the Hilbert-Schmidt inner product between the invariant shape operator $\mathcal{A}$ and the shape operator $A$ is zero.
\end{lemma}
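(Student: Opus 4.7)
The plan is to rely on two well-known properties of a bi-invariant metric on $\mathbb{G}$: first, the identity $\nabla_X Y = \tfrac{1}{2}[X,Y]$ for any left-invariant vector fields $X,Y$ (already invoked in the excerpt in the commutative case), and second, the ad-skew identity $\langle [X,Y], Z\rangle = -\langle Y, [X,Z]\rangle$ for $X,Y,Z \in \mathfrak{g}$, which is the infinitesimal form of bi-invariance. Both combine to force the defining expression for the entries of $\alpha_B$ into a manifestly skew shape.

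Concretely, I would start from the definition and substitute the bi-invariant connection formula to obtain
\[
(\alpha_B)_{ij} \;=\; \langle \nabla_{W_i} W_{n+1}, W_j\rangle \;=\; \tfrac{1}{2}\langle [W_i, W_{n+1}], W_j\rangle \;=\; -\tfrac{1}{2}\langle W_{n+1}, [W_i, W_j]\rangle,
\]
where the last equality uses the ad-skew identity. Swapping $i \leftrightarrow j$ and using the antisymmetry $[W_j, W_i] = -[W_i, W_j]$ gives $(\alpha_B)_{ji} = -(\alpha_B)_{ij}$, so $\alpha_B$ is anti-symmetric.

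For the Hilbert--Schmidt statement, I would pick the orthonormal frame $B$ with $W_{n+1}(p) = \eta(p)$, so that $\{W_1(p),\dots,W_n(p)\}$ is an orthonormal basis of $T_p\Sigma$ in which $\alpha_p$ is represented by the skew matrix computed above. Since the shape operator $A_p$ is symmetric in any orthonormal frame of $T_p\Sigma$, and the pairing $\mathrm{tr}(S^{\top} T)$ vanishes on the symmetric$\,\times\,$skew subspace of $\mathrm{End}(T_p\Sigma)$, we conclude $\langle \alpha, A\rangle = 0$ pointwise. I do not expect any substantive obstacle; the only point worth checking is that the matrix representation $\alpha_B$ is frame-independent, which is exactly the content of \eqref{invariant}, so the skew-symmetry computed in one convenient frame transfers to every adapted orthonormal frame along $\Sigma$.
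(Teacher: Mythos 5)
Your proof is correct and follows essentially the same route as the paper: both reduce $(\alpha_B)_{ij}$ to $-\tfrac{1}{2}\langle W_{n+1},[W_i,W_j]\rangle$ and read off skew-symmetry from the antisymmetry of the bracket (the paper gets there via metric compatibility and constancy of $\langle W_{n+1},W_j\rangle$, you via the equivalent ad-skew identity). Your explicit justification of $\langle\alpha,A\rangle=0$ as the vanishing of the trace pairing between a symmetric and a skew endomorphism is exactly what the paper leaves implicit.
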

\begin{proof} Let $B = \{W_1, \ldots, W_{n+1}\}$ be an orthonormal basis of left invariant vector fields and
let  $\alpha_B = (a_{ij})$ be the matrix representation of the invariant shape operator, for $1 \leq i, j \leq n$. The elements $a_{ij}$ are defined by $a_{ij} = \langle \nabla_{W_i}W_{n+1}, W_j \rangle$.

Given that $\mathbb{G}$ is equipped with a bi-invariant metric, the covariant derivative relates to the Lie bracket as follows:
\begin{equation*}
\nabla_{X}Y = \frac{1}{2} [X, Y],
\end{equation*}
for any left invariant vector fields $X$ and $Y$ in the Lie algebra $\mathfrak{g}$ of $\mathbb{G}$.

Hence, by applying the properties of the connection and the Lie brackets, we derive:
\begin{align*}
a_{ij} &= \langle \nabla_{W_i}W_{n+1}, W_j \rangle \\
       &= -\langle W_{n+1}, \nabla_{W_i}W_j \rangle \\
       &= -\frac{1}{2} \langle W_{n+1}, [W_i, W_j] \rangle \\
       &= \frac{1}{2} \langle W_{n+1}, [W_j, W_i] \rangle \\
       &= -a_{ji}.
\end{align*}

This demonstrates that the matrix $\alpha_B$ is anti-symmetric.
\end{proof}

\section{The energy index of harmonic maps}\label{Harmonic}

Let $\Sigma$ be a closed 2-dimensional Riemannian surface and let
$\mathcal{N}: \Sigma \rightarrow \mathbb{S}^2$ be a harmonic map from $\Sigma$
to the unit 2-sphere. This means that $\mathcal{N}$ is a critical point of the
energy functional
$$
\mathcal{E}(\mathcal{N}) = \frac{1}{2} \int_\Sigma e(\mathcal{N}) \, d\Sigma,
$$
where $e(\mathcal{N}) = \sum_i \langle d\mathcal{N} e_i, d\mathcal{N} e_i
\rangle$ is the energy density computed in a local orthonormal frame
$\{e_1, e_2\}$ on $\Sigma$.

For a section $\xi$ of the pull-back bundle $\mathcal{N}^{\ast}T\mathbb{S}^2$, 
the second variation of the energy $\mathcal{E}$ at a harmonic map $\mathcal{N}$, 
as proved by Mazet \cite{Mazet} and Smith \cite{Smith}, is given by
$$
D^2_{\xi}\mathcal{E} = \int_\Sigma \langle \xi, J_{\mathcal{N}} \xi \rangle \, d\Sigma.
$$

Here, $J_{\mathcal{N}}$ denotes the Jacobi operator. In the special case where 
the target manifold is the unit sphere, it is given by
$$
J_{\mathcal{N}} \xi = -\tilde{\nabla}^{\ast}\tilde{\nabla} \xi 
 - \sum_{i=1}^2  R_{\mathbb{S}^2}(\xi, d\mathcal{N}e_i)\,d\mathcal{N}e_i,
$$
where $\tilde{\nabla}$ denotes the connection on the bundle 
$\mathcal{N}^{\ast}T\mathbb{S}^2$, $\tilde{\nabla}^{\ast}$ is its formal adjoint, 
and $R_{\mathbb{S}^2}$ is the curvature tensor of $\mathbb{S}^2$ (see \cite{Leung}).

Since $\Sigma$ is closed, $J_{\mathcal N}$ is a self-adjoint elliptic operator
with discrete spectrum. Thus there exists a sequence of real eigenvalues
$$
\lambda_1 \le \lambda_2 \le \lambda_3 \le \cdots \to +\infty,
$$
each with finite multiplicity, and a corresponding $L^2$–orthonormal basis of
eigenvectors. If $\xi$ is an eigenvector with eigenvalue $\lambda$, that is,
$J_{\mathcal N}\xi = \lambda \xi$, then
$$
D^2_\xi \mathcal E = \int_\Sigma \langle \xi, J_{\mathcal N}\xi \rangle\, d\Sigma
 = \lambda \int_\Sigma \|\xi\|^2\, d\Sigma.
$$

Moreover, the eigenvalues admit the usual min–max characterization (see, for
instance, \cite[p.~566]{Palmer91}): for each $k \ge 1$ one has
$$
\lambda_k 
= \sup_{\mathcal V^{k-1}} \ \inf_{\substack{\xi \in (\mathcal V^{k-1})^\perp \\ \xi \neq 0}}
\frac{D^2_\xi\mathcal E }{\displaystyle\int_\Sigma \|\xi\|^2\, d\Sigma},
$$
where the supremum is taken over all subspaces $\mathcal V^{k-1}$ of
$\Gamma(\mathcal{N}^\ast T\mathbb S^2)$ of dimension $k-1$, and the orthogonal
complement is taken with respect to the $L^2$ inner product.

The \emph{energy index} of $\mathcal N$, denoted by $\mathrm{Ind}_{\mathcal E}(\mathcal N)$,
is defined as the number of negative eigenvalues of $J_{\mathcal N}$ counted
with multiplicities. This is a significant geometric invariant that reflects
the number of linearly independent variations of the harmonic map $\mathcal N$
that decrease the energy. A harmonic map $\mathcal{N}$ is called
\emph{stable} if $\mathrm{Ind}_{\mathcal{E}}(\mathcal{N}) = 0$.

\subsection{Harmonic vector fields as test variations}\label{testvariations}

In the particular case of the harmonic Gauss map $\mathcal{N}$, as defined in
\eqref{DefN}, we observe that there exists a bundle isometry between $T\Sigma$
and $\mathcal{N}^{\ast}T\mathbb{S}^2$. Consequently, vector fields on $\Sigma$
can be used as test variations for the second derivative of the energy
functional. More precisely, for any vector field $\xi \in T\Sigma$ there
corresponds a section $\bar{\xi} \in \Gamma(\mathcal{N}^{\ast}T\mathbb{S}^2)$,
defined by
$$
\bar{\xi}(p) = (dL_{p^{-1}})_p \xi(p),
$$
where $T_p\Sigma$ is regarded as a subspace of $T_p\mathbb{G}^3$.

In this situation, the second variation can be written as
\begin{align*} 
D^2_\xi \mathcal E &=   
\int_\Sigma \Big\langle -\nabla^{\ast} \nabla \xi 
-\sum_i R_{\mathbb S^2}(\xi, d\mathcal{N}e_i)d\mathcal{N}e_i, \xi \Big\rangle\, d\Sigma,
\end{align*}
where $\nabla$ denotes the Riemannian connection on $T\Sigma$.

If $K_\Sigma$ stands for the Gauss curvature of $\Sigma$, we then use the
Bochner formula $\nabla^*\nabla \xi = -\Delta \xi + K_\Sigma \xi$ and the Gauss
equation to derive
\begin{align}\label{formulaE}
D^2_\xi \mathcal E =
\int_\Sigma \langle \Delta \xi - K_\Sigma \xi, \xi \rangle\, d\Sigma
 - \int_\Sigma \Big(\sum_i \|\xi\|^2 \|d\mathcal{N}e_i\|^2 - \sum_i \langle \xi, d\mathcal{N}e_i \rangle^2 \Big) d\Sigma.
\end{align}

Set
$$
F(\xi) = \sum_i \|\xi\|^2 \|d\mathcal{N}e_i\|^2 - \sum_i \langle \xi, d\mathcal{N}e_i \rangle^2.
$$
To compute $F(\xi)$ we use the identity \eqref{dNX} and the fact that $dL_p$
is an isometry. Let $\{e_i\}$ be a local orthonormal tangent frame at a point
$p\in\Sigma$. Since
$$
dL_p\circ d\mathcal N_p = -(A_p+\mathcal A_p),
$$
we have, for every tangent vector $v$,
$$
\|d\mathcal N(v)\| = \|(A+\mathcal A)v\|,
\qquad
\langle \xi, d\mathcal N(v)\rangle = \langle \xi,(A+\mathcal A)v\rangle.
$$

Hence
\begin{align*}
\sum_i \|d\mathcal N e_i\|^2
&= \sum_i \langle (A+\mathcal A)e_i,(A+\mathcal A)e_i\rangle \\
&= \|A\|^2 + \|\mathcal A\|^2 + 2\langle A,\mathcal A\rangle.
\end{align*}
By Lemma~\ref{alphaA} we have $\langle A,\mathcal A\rangle=0$, and by
From Lemma~\ref{lemma:normA} we know that $\|\mathcal A\|^2 = 2c$, where $c=0$ if $\mathbb{G}$ is Abelian, $c=1$ when $\mathbb{G}$ is the sphere $\mathbb{S}^3$, and $c=\tfrac14$ in the projective case $\mathbb{R}P^3$. Therefore,

$$
\sum_i \|d\mathcal N e_i\|^2 = \|A\|^2 + 2c.
$$

Similarly,
\begin{align*}
\sum_i \langle \xi, d\mathcal N e_i\rangle^2
&= \sum_i \langle \xi,(A+\mathcal A)e_i\rangle^2
 = \|(A+\mathcal A)^t\xi\|^2 \\
&= \|A\xi\|^2 + \|\mathcal A\xi\|^2 - 2\langle A\xi,\mathcal A\xi\rangle,
\end{align*}
where we used that $A$ is symmetric and $\mathcal A$ is skew-symmetric, so
$(A+\mathcal A)^t = A-\mathcal A$. By Lemma~\ref{lemma:normA} we also have
$\|\mathcal A\xi\|^2 = c\|\xi\|^2$, and thus
$$
\sum_i \langle \xi, d\mathcal N e_i\rangle^2
= \|A\xi\|^2 + c\|\xi\|^2 - 2\langle A\xi,\mathcal A\xi\rangle.
$$

Putting these identities together, we obtain
\begin{align}\label{F}
F(\xi)
&= \|\xi\|^2(\|A\|^2+2c) - \big(\|A\xi\|^2 + c\|\xi\|^2 - 2\langle A\xi,\mathcal A\xi\rangle\big) \\
\nonumber
&= (\|A\|^2 + c) \|\xi\|^2 - \|A\xi\|^2  
+ 2 \langle A\xi, \mathcal{A} \xi \rangle.
\end{align}

The Gauss equation for the immersion of $\Sigma$ in $\mathbb{G}^3$ implies
$$
\|A\|^2 = 4H^2 - 2K_{ext} = 4H^2 - 2K_\Sigma + 2c,
$$
where $H$ is the mean curvature of $\Sigma$ and $K_{ext}$ is the product of
the principal curvatures of $\Sigma$. A direct computation yields
$$
\|A\xi\|^2 = 2H \langle A\xi, \xi \rangle - K_\Sigma \|\xi\|^2 + c \|\xi\|^2.
$$

Inserting these identities into \eqref{F}, we obtain
$$
F(\xi) = -K_\Sigma \|\xi\|^2
+ (4H^2 + c) \|\xi\|^2 - 2H \langle A\xi, \xi \rangle
+ \langle A\xi, \mathcal{A}\xi \rangle.
$$

Finally, substituting this expression for $F(\xi)$ into \eqref{formulaE}, we get
\begin{align}\label{mainformulacompletecase}
D^2_\xi \mathcal E =    \int_\Sigma \big(\langle \xi,\Delta \xi\rangle - (4H^2 +c) \|\xi\|^2\big)\,d\Sigma 
 + \int_\Sigma \big(2H \langle A\xi, \xi\rangle - \langle A \xi, \mathcal{A} \xi\rangle\big)\,d\Sigma.
\end{align}

Following the ideas of Palmer \cite{Palmer91, Palmer93}, we will use harmonic
vector fields on $\Sigma$ as test vector fields in this index form. In this
case, the first integral in \eqref{mainformulacompletecase} vanishes and we
obtain
\begin{align}\label{mainformula}
D^2_\xi \mathcal E =  - (4H^2 +c) \int_\Sigma\|\xi\|^2\,d\Sigma  
 +\int_\Sigma \big(2H   \langle A\xi, \xi\rangle  -  \langle A \xi, \mathcal{A} \xi\rangle\big)d\Sigma.
\end{align}

\section{Proof of Theorem \ref{maintheorem1}}

\begin{proof}
Let us denote by $\mathcal{H}(\Sigma)$ the space of harmonic vector fields on $\Sigma$,
and let $J$ be the complex structure on $\Sigma$. If $\xi \in \mathcal{H}(\Sigma)$, then
$\xi^\perp = J\xi$ is a harmonic vector field pointwise orthogonal to $\xi$, and
$\|\xi^\perp\|^2 = \|\xi\|^2$. In particular, we have
$$
\langle A\xi, \xi\rangle + \langle A\xi^\perp, \xi^\perp\rangle = 2H\|\xi\|^2,
$$
and by Lemma \ref{alphaA},
$$
\langle A\xi, \mathcal{A} \xi\rangle + \langle A\xi^\perp, \mathcal{A} \xi^\perp\rangle = 0.
$$

Thus,
\begin{equation}\label{eq:neg}
D^2_\xi\mathcal E+D^2_{J\xi}\mathcal E
=-(4H^2+2c)\int_\Sigma\|\xi\|^2<0,
\end{equation}
indicating that $D^2_\xi \mathcal{E}$ or $D^2_{\xi^\perp} \mathcal{E}$ is negative. Note that if $\mathbb G^3$ is Abelian, that is, if
$c = 0$, we need to assume $H \neq 0$.

Let $k=\operatorname{Ind}_{\mathcal E}(\mathcal N)$ and let $E_-$ be the negative eigenspace of $D^2\mathcal E$ (dimension $k$). Consider
$$
T:\mathcal H(\Sigma)\to\mathbb R^{2k},\qquad
T(\xi)=\Big(\int_\Sigma\langle\xi,X_i\rangle\,d\Sigma,\int_\Sigma\langle\xi^\perp,X_i\rangle\,d\Sigma\Big)_{i=1}^k,
$$
where  $\{X_i\}_{i=1}^k$ is an $L^2$-orthonormal basis of $E_-$. If $2g(\Sigma)=\dim\mathcal H(\Sigma)>2k$, there exists $0\neq \xi\in\ker T$, so $\xi$ and $\xi^\perp$ are $L^2$-orthogonal to $E_-$ and hence $D^2_\xi\mathcal E\ge0$ and $D^2_{\xi^\perp}\mathcal E\ge0$, contradicting \eqref{eq:neg}. Therefore $2g(\Sigma)\le 2k$ and
$$
\operatorname{Ind}_{\mathcal E}(\mathcal N)\ge g(\Sigma).
$$
\end{proof}

\section{Index bounds for complete surfaces}

In this section, we consider the case of complete noncompact surfaces. As mentioned in the Introduction, if $\Sigma$ is a complete CMC surface immersed in $\mathbb{G}^3$, then the generalized Gauss map $\mathcal{N}:\Sigma\to \mathbb{S}^2$ is harmonic. Indeed, harmonicity is a local property. However, in the case of noncompact surfaces, we must consider compactly supported variations. In particular, considering the identification we used in Section \ref{testvariations}, the formula (\ref{mainformulacompletecase}) for the second variation of the energy functional holds for any $\xi \in T\Sigma$ with compact support, and the index $Ind_\mathcal{E}(\mathcal{N})$ is the maximal dimension of such sections where $D^2 \mathcal{E}$ is negatively defined.

A challenge in the noncompact case is that harmonic vector fields may not have compact support. To address this problem, it is natural to use $L^2$-harmonic vector fields. We will explore the correspondence between vector fields $\xi \in T\Sigma$ and 1-forms $\omega \in \Omega^1(\Sigma)$ given by the musical isomorphism. Namely, $\xi = \omega^\sharp$ and $\omega = \xi^\flat$, and $\xi^\perp=(\star \omega)^\sharp$, where $\star$ is the Hodge star operator.  In this context, the space of $L^2$-harmonic 1-forms, denoted by $\mathcal{H}^1(\Sigma)$, serves as a natural choice to capture the topological properties of $\Sigma$, effectively linking the geometry of the surface with its topological structure. In fact, if $\Sigma$ is complete, then $\mathcal{H}^1(\Sigma)$ is isomorphic to the first-reduced $L^2$-cohomology group of $\Sigma$ (see \cite[Corollary 1.6]{Carron01}), and in particular, $\dim \mathcal{H}^1(\Sigma)$ is a topological invariant. 
Moreover, if $\Sigma$ is a complete Riemannian surface with $g(\Sigma)$ handles, then $\dim \mathcal{H}^1(\Sigma) \geq g(\Sigma)$ (see \cite[Section 1.2.2]{Carron01}).

\smallskip

Now, inspired by Palmer \cite{Palmer93}, we need the following cut-off formula.

\begin{lemma}\label{lemma1}
Let $\xi \in T\Sigma$ be a smooth vector field and let $\varphi \in \mathcal{C}^{\infty}_c(\Sigma)$ be a compactly supported smooth function. Then, the second variation of energy in the direction of $ \varphi\xi \in T\Sigma $ is given by:
   \begin{align*}
   D^2_{\varphi \xi} \mathcal{E} =& \int_\Sigma \|\nabla\varphi\|^2 \|\xi\|^2\,d\Sigma + \int_\Sigma \varphi^2\langle \xi, \Delta \xi\rangle\,d\Sigma 
   - (4H^2 + c) \int_\Sigma \varphi^2 \|\xi\|^2\,d\Sigma  \\
   &+\int_\Sigma \varphi^2 (2H \langle A\xi, \xi\rangle - \langle A \xi, \mathcal{A} \xi\rangle)\,d\Sigma.
   \end{align*}
\end{lemma}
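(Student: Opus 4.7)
The plan is to apply formula~(\ref{mainformulacompletecase}) directly to the compactly supported test vector field $\varphi \xi$, which is a legitimate test section even in the noncompact setting. Three of the four integrands in~(\ref{mainformulacompletecase}) are purely algebraic in $\xi$---namely $\|\xi\|^2$, $\langle A\xi, \xi\rangle$, and $\langle A\xi, \alpha \xi\rangle$---so the scalar factor $\varphi$ pulls out as $\varphi^2$ under the substitution $\xi \mapsto \varphi \xi$. This immediately produces the terms involving $(4H^2+c)$, $2H\langle A\xi, \xi\rangle$, and $\langle A\xi, \alpha \xi\rangle$ exactly as they appear in the lemma, each weighted by $\varphi^2$.

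The whole content of the lemma is therefore the cut-off identity
\[
\int_\Sigma \langle \varphi \xi, \Delta(\varphi \xi)\rangle\, d\Sigma = \int_\Sigma \|\nabla \varphi\|^2 \|\xi\|^2\, d\Sigma + \int_\Sigma \varphi^2 \langle \xi, \Delta \xi\rangle\, d\Sigma.
\]
To establish it, I would expand $\Delta(\varphi \xi)$ via the Leibniz rule
\[
\Delta(\varphi \xi) = \varphi \Delta \xi + (\Delta \varphi) \xi + 2\nabla_{\nabla \varphi} \xi,
\]
pair against $\varphi \xi$, and use the pointwise identity $2\langle \xi, \nabla_{\nabla \varphi} \xi\rangle = \langle \nabla \varphi, \nabla \|\xi\|^2 \rangle$. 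Then the scalar identity $\varphi \Delta \varphi = \tfrac{1}{2}\Delta(\varphi^2) - \|\nabla \varphi\|^2$, combined with an integration by parts applied to $\int \Delta(\varphi^2)\|\xi\|^2\,d\Sigma$ (all boundary terms vanish since $\varphi$ has compact support), cancels the cross term $\int \varphi \langle \nabla \varphi, \nabla \|\xi\|^2\rangle$ and leaves the desired gradient-of-cut-off contribution $\int \|\nabla \varphi\|^2 \|\xi\|^2\,d\Sigma$.

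I do not expect any essential obstacle here: this is the vector-valued analog of the standard scalar cut-off identity that underlies every Dirichlet-type quadratic-form manipulation on noncompact manifolds, and $\varphi \xi$ being compactly supported takes care of all integration-by-parts steps. The only point requiring bookkeeping is tracking the sign convention for $\Delta$ on sections of $T\Sigma$ inherited from~(\ref{mainformulacompletecase})---i.e.\ the convention dictated by the form of the Bochner formula $\nabla^*\nabla \xi = -\Delta \xi + K_\Sigma \xi$ used in~(\ref{formulaE})---so that the extra term $\int \|\nabla \varphi\|^2 \|\xi\|^2\, d\Sigma$ appears with the positive sign stated in the lemma.
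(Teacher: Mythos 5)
Your proposal is correct and follows essentially the same route as the paper, which likewise expands $\Delta(\varphi\xi)=\varphi\Delta\xi+2\nabla_{\nabla\varphi}\xi+(\Delta\varphi)\xi$, integrates by parts, and substitutes into (\ref{mainformulacompletecase}), the algebraic terms trivially acquiring the factor $\varphi^2$; you merely spell out the integration by parts that the paper leaves as ``a direct computation.'' The sign caveat you flag at the end is indeed the only delicate point: the stated $+\int_\Sigma \|\nabla\varphi\|^2\|\xi\|^2\,d\Sigma$ is the sign produced when $\Delta$ is the nonnegative (Hodge-type) Laplacian implicit in the paper's Bochner formula, for which $\int_\Sigma\langle\eta,\Delta\eta\rangle\,d\Sigma=\int_\Sigma\|\nabla\eta\|^2\,d\Sigma+\int_\Sigma K_\Sigma\|\eta\|^2\,d\Sigma$, and with that convention your cut-off identity closes exactly as claimed.
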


\begin{proof}
A direct computation shows that $\Delta (\varphi \xi) = \varphi \Delta \xi + 2 \nabla_{\nabla \varphi} \xi + (\Delta \varphi) \xi$.

Integrating this formula by parts and plugging it into formula \eqref{mainformulacompletecase}, we get the result.
\end{proof}

By applying this result to $\xi$ and $\xi^{\perp}$ and adding the results, we obtain:

\begin{lemma}\label{lemma2}
If $\xi$ is a harmonic vector field and $\varphi \in \mathcal{C}^{\infty}_c(\Sigma)$, then,
$$ 
   D^2_{\varphi \xi} \mathcal{E} + D^2_{\varphi \xi^{\perp}} \mathcal{E} = - (4H^2 + 2c) \int_\Sigma \varphi^2 \|\xi\|^2\,d\Sigma + 2 \int_\Sigma \|\nabla\varphi\|^2 \|\xi\|^2\,d\Sigma.
$$
\end{lemma}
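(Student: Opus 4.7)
The strategy is the one announced in the text: apply Lemma \ref{lemma1} separately to $\xi$ and to $\xi^{\perp} = J\xi$ and add the two identities term by term, then simplify using the algebraic relations already exploited in the proof of Theorem \ref{maintheorem1}. Since the complex structure $J$ on $\Sigma$ is parallel and corresponds to the Hodge star $\star$ via the musical isomorphism, it commutes with the Hodge Laplacian, so $\xi^{\perp}$ is harmonic whenever $\xi$ is. Hence both $\Delta\xi$ and $\Delta\xi^{\perp}$ vanish, and the Laplacian terms $\int_\Sigma \varphi^2 \langle \xi, \Delta\xi\rangle\,d\Sigma$ and $\int_\Sigma \varphi^2 \langle \xi^{\perp}, \Delta\xi^{\perp}\rangle\,d\Sigma$ drop out of the sum. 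Using the pointwise identity $\|\xi^{\perp}\|^2 = \|\xi\|^2$, the gradient and potential contributions assemble into $2\int_\Sigma \|\nabla\varphi\|^2 \|\xi\|^2\,d\Sigma$ and $-2(4H^2+c)\int_\Sigma \varphi^2\|\xi\|^2\,d\Sigma$ respectively.

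The remaining task is to simplify the extrinsic contributions
\[
2H\bigl(\langle A\xi,\xi\rangle + \langle A\xi^{\perp}, \xi^{\perp}\rangle\bigr) - \bigl(\langle A\xi, \alpha\xi\rangle + \langle A\xi^{\perp}, \alpha\xi^{\perp}\rangle\bigr).
\]
Wherever $\xi \neq 0$, the pair $\{\xi/\|\xi\|,\ \xi^{\perp}/\|\xi\|\}$ is an orthonormal frame on $\Sigma$, so
\[
\langle A\xi,\xi\rangle + \langle A\xi^{\perp}, \xi^{\perp}\rangle = \|\xi\|^2 \operatorname{tr}(A) = 2H\|\xi\|^2,
\]
and analogously $\langle A\xi, \alpha\xi\rangle + \langle A\xi^{\perp}, \alpha\xi^{\perp}\rangle = \|\xi\|^2 \langle A, \alpha\rangle$, with the Hilbert--Schmidt inner product on the right; both identities extend by continuity to the zero set of $\xi$. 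By Lemma \ref{alphaA}, the second sum vanishes identically, while the first contributes $+4H^2 \int_\Sigma \varphi^2\|\xi\|^2\,d\Sigma$ after multiplication by $2H$. Collecting all the pieces gives
\[
2\int_\Sigma \|\nabla\varphi\|^2\|\xi\|^2\,d\Sigma - 2(4H^2+c)\int_\Sigma \varphi^2\|\xi\|^2\,d\Sigma + 4H^2\int_\Sigma \varphi^2\|\xi\|^2\,d\Sigma,
\]
which simplifies to the stated formula $2\int_\Sigma \|\nabla\varphi\|^2\|\xi\|^2\,d\Sigma - (4H^2 + 2c)\int_\Sigma \varphi^2\|\xi\|^2\,d\Sigma$.

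No real obstacle appears: the computation is a bookkeeping extension of the closed-case argument with the cutoff $\varphi$ inserted. The only points requiring a moment of care are verifying that $\xi^{\perp}$ is still harmonic (so the $\Delta$-terms vanish) and recognizing that the two key identities from the proof of Theorem \ref{maintheorem1} are genuinely pointwise statements, hence survive multiplication by $\varphi^2$ and integration.
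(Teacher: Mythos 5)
Your argument is correct and is precisely the paper's own route: apply Lemma \ref{lemma1} to $\xi$ and to $\xi^{\perp}$, add, kill the $\Delta$-terms by harmonicity of both fields, and use the pointwise identities $\langle A\xi,\xi\rangle+\langle A\xi^{\perp},\xi^{\perp}\rangle=2H\|\xi\|^2$ and $\langle A\xi,\alpha\xi\rangle+\langle A\xi^{\perp},\alpha\xi^{\perp}\rangle=0$ from the proof of Theorem \ref{maintheorem1}. In fact you supply more detail than the paper (which states the lemma with no written proof), notably the observation that $J$ commutes with the Hodge Laplacian so that $\xi^{\perp}$ is again harmonic.
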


In particular, choosing a standard cut-off function $\varphi(r)$ depending on the distance $r$ to a fixed point, we obtain
$$
D^2_{\varphi \xi} \mathcal{E} + D^2_{\varphi \xi^{\perp}} \mathcal{E} < 0.
$$

Now, recalling that the space of $L^2$-harmonic vector fields is isomorphic to $\mathcal{H}^1(\Sigma)$, we can proceed as in the proof of Theorem \ref{maintheorem1} to prove our result for complete surfaces:

\begin{theorem}
Let $\Sigma$ be a complete non-compact constant mean curvature $H$ surface immersed in a three-dimensional Lie group $\mathbb{G}^3$ endowed with a bi-invariant metric. If $\mathbb{G}^3$ is an Abelian Lie group,  assume further that $H \neq 0$.
Then,
$$
\text{Ind}_{\mathcal{E}}(N) \geq \dim \mathcal{H}^1(\Sigma)/2.
$$
\end{theorem}

\section{Energy index and area index}

In this section, we discuss the relationship between the energy index of the Gauss map and the area index of the surface. This connection provides a deeper understanding of the stability properties of CMC surfaces in the context of harmonic maps and their variational framework.

The stability of a closed CMC surface $\Sigma \subset \mathbb{G}^3$ is determined by the second variation of the area functional. Given $\phi \in C^\infty(\Sigma)$, a smooth function on $\Sigma$, the second variation of the area functional is given by:
$$
D^2 _\phi A = \int_\Sigma \left( \|\nabla \phi\|^2 - \big( \|A\|^2 + \text{Ric}(\eta,\eta) \big) \phi^2 \right) d\Sigma,
$$
where $\text{Ric}(\eta,\eta)$ is the Ricci curvature of $\mathbb{G}^3$ in the direction of the unit normal $\eta$ to $\Sigma$. We say that $\Sigma$ is stable if $D^2 _\phi A  \geq 0$ for all smooth functions $\phi$ on $\Sigma$.

On the other hand, as shown in Section \ref{Harmonic}, the second variation of the energy functional of the Gauss map $\mathcal{N} : \Sigma \to \mathbb{S}^2$ is given by:
$$
D^2_\xi  \mathcal{E} = \int_\Sigma \left( \|\nabla \xi\|^2 - F(\xi) \right) d\Sigma,
$$
where $F(\xi)$ depends on the geometric properties of $\Sigma$, as described in Equation \eqref{F}. 

Following Palmer’s work~\cite{Palmer91}, we compare the stability of the area functional with that of the energy functional in the case where $\mathbb{G}^3$ is an Abelian Lie group. Using Kato's inequality,
$$
\|\nabla \|\xi\|\|^2 \leq \|\nabla \xi\|^2,
$$
and if $\mathbb{G}^3$ is Abelian, $F(\xi) \geq \|A\|^2 \|\xi\|^2$. Substituting these into the expression for $D^2_\xi \mathcal{E}$, we find:
$$
D^2_ \xi \mathcal{E} \geq \int_\Sigma \left( \|\nabla \|\xi\|\|^2 - \|A\|^2 \|\xi\|^2 \right) d\Sigma = D^2_{\|\xi\|}A.
$$

This inequality establishes that if the area functional is stable for all variations, then the Gauss map $\mathcal{N}$ is also stable. 

To conclude, we recall a related result in the context of minimal immersions. Ejiri and Micallef~\cite{Ejiri_Micallef} studied minimal immersions of closed Riemannian surfaces, which are critical points of both the area and the Dirichlet energy functionals, and showed that the Morse index of the energy functional $ i_E $ and the Morse index of the area functional $ i_A $ satisfy
$$
i_E \leq i_A \leq i_E + r(g, b),
$$
where $ r(g, b) $ depends on the genus $ g $ and the number $ b $ of branch points of the immersion. This inequality was recently extended to the setting of constant mean curvature surfaces by Seemungal and Sharp~\cite{SeemungalSharp}. 

In both cases, the energy functional corresponds to the immersion itself. In contrast, our analysis concerns the energy of the harmonic Gauss map of a CMC immersion. Whether a similar index inequality holds in this context remains an open and intriguing question.

\bibliographystyle{amsplain}
\bibliography{referencias}
 

\end{document}